\numberwithin{equation}{section}
\theoremstyle{plain}
\newtheorem{theorem}[equation]{Theorem}
\newtheorem{lemma}[equation]{Lemma}
\theoremstyle{definition}
\title{Upper and lower bounds on the size of $B_k[g]$ sets}
\author{Griffin Johnston\thanks{Department of Mathematics \& Statistics, Villanova University, Villanova, PA, U.S.A. $\{${\texttt{jjohns79}, \texttt{michael.tait}}$\}$\texttt{@villanova.edu}. The second author is partially supported by National Science Foundation grant DMS-2011553.} \and Michael Tait\footnotemark[1] \and Craig Timmons\thanks{Department of Mathematics and Statistics,  California State University Sacramento,  U.S.A. \texttt{craig.timmons@csus.edu}. Research is supported in part by Simons Foundation Grant \#359419}}
\date{\today}
\begin{document}

\maketitle
\begin{abstract}
   A subset $A$ of the integers is a $B_k[g]$ set if the number of multisets from $A$ that sum to any fixed integer is at most $g$. Let $F_{k,g}(n)$ denote the maximum size of a $B_k[g]$ set in $\{1,\dots, n\}$. In this paper we improve the best-known upper bounds on $F_{k,g}(n)$ for $g>1$ and $k$ large. When $g=1$ we match the best upper bound of Green with an improved error term. Additionally, we give a lower bound on $F_{k,g}(n)$ that matches a construction of Lindstr\"om while removing one of the hypotheses.
\end{abstract}

\section{Introduction}

We will denote the integers $\{1,2,\dots, n\}$ by $[n]$. Given natural numbers $k$ and $g$, a subset of $\mathbb{Z}$ is called a $B_k[g]$ set if for any $m$ there are at most $g$ multisets $\{x_1,\dots, x_k\}$ such that $x_1+\cdots +x_k = m$ and $x_i\in A$. Determining bounds on the maximum size of a $B_k[g]$ set in $[n]$ is a difficult and well-studied problem and is the focus of this paper. Let $F_{k,g}(n)$ be the maximum size of a $B_k[g]$ set in $[n]$. 

When $k=2$ and $g=1$, $B_2[1]$ sets are called {\em Sidon sets} and have been studied extensively since being introduced by Sidon \cite{Sidon} in the context of Fourier series, 
and then studied further by Erd\H{o}s and Tur\'an \cite{ET} from a combinatorial perspective. It is known \cite{BC, L} that $F_{2,1}(n) \sim n^{1/2}$, and determining whether or not $F_{2,1}(n) = n^{1/2} + O(1)$ is a 500 USD Erd\H{o}s problem \cite{E}. Very recently the error term was improved
by Balogh, F\"uredi, and Roy \cite{BFR} to $F_{2,1}(n) \leq n^{1/2} + 0.998n^{1/4}$ for sufficiently large $n$.  This represents the first improvement upon the error term in over 50 years.  

For other choices of $k$ and $g$, the asymptotic behavior of $F_{k,g}(n)$ has not been determined, yet 
there are upper and lower bounds that give the order of magnitude as a function of $n$.  
First we discuss upper bounds.

If $A$ is a $B_k[g]$ set contained in $[n]$, then each of the $\binom{|A| + k - 1}{k}$ multisets of size $k$ from $A$ determines a sum which is an integer in $[kn]$.  Each such integer
can appear as a sum at most $g$ times so $\binom{|A| + k - 1}{k} \leq gkn$.  This implies $F_{k,g}(n) \leq (gk!kn)^{1/k}$ which is known as the trivial bound.  
When $g=1$, a $B_k[g]$ set is often called a $B_k$ set. Nontrivial bounds on the size of a $B_k$ set were given by Jia \cite{Jia2} and Kolountzakis \cite{K} for $k$ 
even, and by Chen \cite{C} for $k$ odd.  These bounds show that 
\begin{equation}\label{JiaChen}
    F_{k,1}(n) \leq 
    \left(
    \left\lfloor \frac{k}{2}\right\rfloor !
     \left\lceil \frac{k}{2}\right\rceil !
     kn
    \right)^{1/k} + O_k(1).
\end{equation}
When $k$ is large, these bounds were improved by Green \cite{G}, who proved
\begin{equation}\label{green upper bound}
    F_{k,1}(n) \leq \left( \left\lceil\frac{k}{2}\right\rceil ! \left\lfloor \frac{ k}{2}\right\rfloor !
\sqrt{\frac{\pi k}{2}}(1+ \epsilon_k)n
\right)^{1/k}.
\end{equation}
It is noted (see page 379 of \cite{G}) that $\epsilon_k$ can be taken to be $O(k^{-1/8})$.

For $g>1$, Cilleruelo, Ruzsa, and Trujillo \cite{CRT} improved the 
trivial bound by proving  
\begin{equation}\label{eq CRT}
    F_{k,g}(n) \leq \left( \frac{k! kgn}{1 + \cos^k(\pi/k)}\right)^{1/k}.
\end{equation}
 Using an idea that the author's attribute to Alon, Cilleruelo and Jim{\'e}nez-Urroz \cite{CJ} showed 
 \begin{equation}\label{eq CJ}
     F_{k,g}(n) \leq \left( \sqrt{3k}k!gn\right)^{1/k}.
 \end{equation}
When $3\leq k\leq 6$, \eqref{eq CRT} is better and \eqref{eq CJ} is better for large $k$. Currently the best general upper bound, proved by the third author \cite{T}, is  
\begin{equation}\label{eq craig}
    F_{k,g}(n) \leq (1+o(1))\left(  \frac{x_k k! k gn}{\pi}\right)^{1/k}.
\end{equation}
Here $x_k$ is the unique real number in $(0,\pi)$ that satisfies $\frac{\sin x_k}{x_k} = \left( \frac{4}{3-\cos(\pi/k)} -1 \right)^k$. 
In \cite{T} it is shown that this upper bound is better than both \eqref{eq CRT} and \eqref{eq CJ} but that $\frac{x_k k! k g}{\pi} \to \sqrt{3k}k!g$ as $k\to\infty$.

Our first main theorem is an upper bound for large $k$ that improves \eqref{eq craig} and matches \eqref{green upper bound}. Our theorem improves the error term in \eqref{green upper bound} from $O(k^{-1/8})$ \cite{G} to $O(k^{-1/3})$.

\begin{theorem}\label{thm: normal upper bound}
Let $A \subset [n]$ be a $B_k[g]$ set. 
\begin{itemize}
    \item [(i)] When $g = 1$
    \[
|A| \leq \left( \left\lceil\frac{k}{2}\right \rceil !
\left\lfloor\frac{k}{2}\right\rfloor!
\sqrt{\frac{\pi k}{2}}(1+ O(k^{-1/3}))n
\right)^{\frac{1}{k}},
\]
and 
\item[(ii)] when $g>1$ 
\[
|A| \leq \left(\sqrt{\frac{\pi k}{2}}k!g(1+ O(k^{-1/3}))n\right)^{\frac{1}{k}}.
\]
\end{itemize}

\end{theorem}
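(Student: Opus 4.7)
Both parts use the same Gaussian-weighted pigeonhole scheme: exhibit a representation function whose maximum is bounded above by the $B_k[g]$ hypothesis and below by a quantitative local central limit theorem for $k$-fold sums from $A$. Write $N=|A|$ and let $\mu$ and $\sigma^2$ denote the mean and variance of the uniform distribution on $A$; by Popoviciu's inequality $\sigma\le n/2$.

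For (ii), let $R(v)$ count ordered $k$-tuples $(a_1,\dots,a_k)\in A^k$ with $a_1+\cdots+a_k=v$. Each $k$-multiset contributes at most $k!$ orderings and at most $g$ multisets sum to any given value, so $R(v)\le k!g$. Let $X_1,\dots,X_k$ be i.i.d.\ uniform on $A$ and $S=X_1+\cdots+X_k$. For a scale parameter $V$ to be chosen I would consider
\[
\sum_v R(v)\,e^{-(v-k\mu)^2/(2V)} \;=\; N^k\,\mathbb{E}\!\left[e^{-(S-k\mu)^2/(2V)}\right].
\]
A quantitative local CLT with Berry--Esseen type error gives $\mathbb{E}[e^{-(S-k\mu)^2/(2V)}]\ge \sqrt{V/(V+k\sigma^2)}(1-O(k^{-1/3}))$, while $\sum_v e^{-(v-k\mu)^2/(2V)}\le\sqrt{2\pi V}(1+o(1))$. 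Dividing and using $R(v)\le k!g$ yields $N^k\le k!g\,\sqrt{2\pi k\sigma^2}(1+O(k^{-1/3}))$, and $\sigma\le n/2$ then gives part (ii).

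For (i) with $g=1$, the scheme above loses the Jia--Chen factor $\binom{k}{\lceil k/2\rceil}$. To recover it, set $s=\lceil k/2\rceil$, $t=\lfloor k/2\rfloor$ and replace $R$ by the diagonal-free signed representation count
\[
D^*(w) \;=\; \#\bigl\{(\vec a,\vec b)\in A^s\times A^t : \Sigma(\vec a)-\Sigma(\vec b)=w,\ \{\vec a\}\cap\{\vec b\}=\varnothing\bigr\},
\]
where $\Sigma$ is coordinate sum and $\{\vec a\}$ is the underlying multiset. Two such pairs $(\vec a,\vec b)$ and $(\vec a',\vec b')$ with a common $w$ satisfy $\Sigma(\vec a)+\Sigma(\vec b')=\Sigma(\vec a')+\Sigma(\vec b)$, so the $B_k[1]$ hypothesis forces $\{\vec a\}\cup\{\vec b'\}=\{\vec a'\}\cup\{\vec b\}$ as $k$-multisets; the disjointness conditions let me match elements of $\vec a$ with elements of $\vec a'$ (an element of $\vec a$ cannot lie in $\vec b$, so must lie in $\vec a'$), producing $\{\vec a\}=\{\vec a'\}$ and $\{\vec b\}=\{\vec b'\}$ and hence $D^*(w)\le s!t!$. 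Since a union-bound estimate gives $\sum_w D^*(w)\ge N^k(1-O(k^2/N))$ and $\Sigma(\vec a)-\Sigma(\vec b)$ has variance $(s+t)\sigma^2=k\sigma^2$, the same Gaussian-weighted argument applied to $D^*$ yields $N^k\le s!t!\,n\sqrt{\pi k/2}\,(1+O(k^{-1/3}))$.

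\textbf{Main obstacle.} The crux is the local CLT with remainder $O(k^{-1/3})$ uniform over $A\subset[n]$. Green obtained $O(k^{-1/8})$ via a Fourier cutoff that was not optimally calibrated; the sharper exponent will come from balancing the cubic Taylor error in $\log|f_A(x)|^2=\log N^2-4\pi^2\sigma^2 x^2+O(n^3|x|^3)$ against the tail of the Gaussian window. Choosing the window of half-width $|x|\lesssim (k\sigma^2)^{-1/2}\cdot k^{-1/6}$ equates the cubic error (of order $kn^3|x|^3$) with the Gaussian tail, and both come out $O(k^{-1/3})$; the main technical task will be showing that this cutoff captures essentially all the mass of $|f_A|^{2k}$ with no arithmetic hypothesis on $A$ beyond $A\subset[n]$.
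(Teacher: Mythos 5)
Your plan takes a genuinely different technical route than the paper, though the probabilistic heart is the same. The paper centers the uniform distribution on $A$ and compares the cumulative distribution function of $Z = X_1 + \cdots + X_k$ (or of $Y = Y_1 - Y_2$ with $Y_1$ a $\lceil k/2\rceil$-fold sum and $Y_2$ a $\lfloor k/2\rfloor$-fold sum) to the Gaussian CDF via the Berry--Esseen theorem, and bounds $\mathbb{P}[-t < Y \le t]$ both above (via the $B_k[g]$ property) and below (via Berry--Esseen plus a pointwise lower bound on the Gaussian), with $t = k^{1/3}n$ chosen so that the two error sources both come out $O(k^{-1/3})$. You instead form a Gaussian-weighted sum of the representation function and invoke a smoothed local CLT. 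These are closely related — your weighted-moment bound can be derived from the Berry--Esseen CDF bound by integration by parts, and the same $k^{-1/3}$ balancing appears with $V\sim n^2k^{2/3}$ — but the paper's use of Berry--Esseen as a black box over a window is simpler than the Fourier argument you sketch, and it makes the proof complete without having to recalibrate Green's cutoff.

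Two genuine gaps remain in your proposal. First, you correctly observe that for $g=1$ one needs to control degenerate tuples, and your disjoint representation count $D^*$ is a clean fix (indeed cleaner than the paper, whose claim that there is ``at most one solution up to rearranging'' to $a_1+\cdots - \cdots - a_k = c$ is false when an element of the first block equals one of the second — e.g.\ for $k=2$, $Y=0$ has $|A|$ ordered solutions, not one; the paper's conclusion survives because the degenerate contribution is a vanishing fraction, but it is not addressed there). However, you leave the crucial local CLT with $O(k^{-1/3})$ error as your ``main obstacle'' rather than proving it; the paper sidesteps this need entirely by working with CDFs. Second, and more seriously, your plan has no provision for the small-variance regime. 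Both the Berry--Esseen error $\asymp n/(\sqrt{k}\,\sigma)$ and your Taylor remainder $O(n^3|x|^3)$ are controlled by $n$, not by $\sigma$, so when $\mathrm{Var}(A) = \delta n^2$ with $\delta$ small the approximation breaks down. Popoviciu's inequality $\sigma\le n/2$ is a one-sided bound and does not help here. The paper handles this by an explicit case split: if $\delta < \pi/24$, it invokes the variance bound of Cilleruelo and Jim\'enez-Urroz (Theorem~\ref{variance upper bound}), which already implies the stated conclusion; only for $\delta\ge \pi/24$ does it run the normal-approximation argument. You would need an analogous fallback for small $\delta$ for the argument to be uniform over all $A\subset[n]$.
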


The proof of Theorem \ref{thm: normal upper bound} uses the Berry-Esseen theorem and is inspired by recent work of Dubroff, Fox, and Xu \cite{DFX} who used a similar technique applied to the Erd\H{o}s distinct subset sums problem.

\medskip

Next we turn to lower bounds. Bose and Chowla \cite{BC} constructed a $B_k$ set of size $q$ in $\mathbb{Z}_{q^k-1}$ for $q$ an odd prime power, and this implies that $F_{k,1}(n) \geq (1-o(1))n^{1/k}$. Lindstr\"om \cite{L} showed
\begin{equation}\label{eq lindstrom}
F_{k,g}(n) \geq (1-o(1))(gn)^{1/k},
\end{equation}
when $g = m^{k-1}$ for some integer $m$. Cilleruelo and Jim\'enez-Urroz \cite{CJ} showed that for any $\epsilon > 0$, there exists a constant $g(k,\epsilon)$ so that for $g>g(k,\epsilon)$
\begin{equation}
    F_{k,g}(n) \geq \left((1-\epsilon) \sqrt{\frac{\pi k}{6} } g n \right)^{1/k}.
\end{equation}
Our second main theorem matches the bound in \eqref{eq lindstrom} and removes 
the requirement that $g = m^{k-1}$.  

\begin{theorem}\label{thm:lower bound}
For any integers $k \geq 2$ and $g \geq 1$, we have $F_{k,g}(n) \geq (1-o(1))(gn)^{1/k}$.
\end{theorem}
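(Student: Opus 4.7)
The plan is to extend Lindström's construction from the case $g = m^{k-1}$ (integer $m$) to arbitrary $g \geq 1$. Recall Lindström's construction: take a Bose--Chowla $B_k$ set $B \subseteq [q^k]$ of size $q$ (available for $q$ a prime power), fix an integer $m$, and form
\[
A_0 = mB + \{0, 1, \ldots, m-1\} = \{mb + c : b \in B,\ 0 \leq c \leq m-1\} \subseteq [mq^k].
\]
Then $|A_0| = qm$, and a standard argument shows $A_0$ is $B_k[m^{k-1}]$: for any sum $S$ of $k$ elements, the decomposition $S = m\sum_i b_i + \sum_i c_i$ with $\sum_i c_i \in [0, k(m-1)]$ and $\sum_i c_i \equiv S \pmod m$ admits at most $k$ admissible values of $\sum_i c_i$; each such value forces $\sum_i b_i$ uniquely (hence the multiset $\{b_i\}$ via the $B_k$-property of $B$), and the root-of-unity identity $\sum_{C' \equiv R \pmod m} T(C') = m^{k-1}$ applied to the generating function $(1 + x + \cdots + x^{m-1})^k$ bounds the total pair-multiset count by $m^{k-1}$. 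This recovers the Lindström bound $|A_0| \geq (1-o(1))(m^{k-1}n)^{1/k}$ for $n = mq^k$, matching \eqref{eq lindstrom} when $g = m^{k-1}$.

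For a general $g$, I would first take $m = \lfloor g^{1/(k-1)} \rfloor$, so that $m^{k-1} \leq g$ and $A_0 \in B_k[g]$ holds automatically. The shortfall from the target is a fixed constant: $|A_0|/(gn)^{1/k} = (m^{k-1}/g)^{1/k}$ may be strictly below $1$. To close this gap and achieve $(1-o(1))(gn)^{1/k}$, I would augment $A_0$ with additional elements drawn from $[mq^k] \setminus A_0$, exploiting the slack $g - m^{k-1} \geq 0$ in the multiplicity condition. This can be carried out either probabilistically — take a random subset $R$ of the complement of appropriate density and apply an alteration step, deleting any element participating in a sum whose multiplicity has risen above $g$ — or by a structural approach replacing $\{0, 1, \ldots, m-1\}$ with a carefully designed subset $C$ so that $A = mB + C$ is precisely a $B_k[g]$ set of the required size.

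The principal obstacle is a quantitative control of the augmentation step. Because Lindström's $A_0$ has many sums sitting at exactly the maximal multiplicity $m^{k-1}$, each added element is likely to simultaneously push several sums over $g$, so a naive alteration may force too many deletions to yield a net gain. The crux is therefore to show that the expected number of elements requiring deletion is only $o(qm)$, so that the net augmentation reaches $(1-o(1))(gn)^{1/k}$. I expect this to require either a second-moment or concentration analysis of the sum-multiplicity statistics of the augmented set — exploiting the algebraic rigidity of the Bose--Chowla $B$ — or an explicit construction of the augmenting subset leveraging additional structure (for instance, choosing $C$ to be a Sidon-type set so that the multiplicities in $mB+C$ are exactly tuned to $g$).
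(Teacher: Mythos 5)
Your proposal correctly identifies that Lindström's construction only handles $g$ of the form $m^{k-1}$ and that the task is to close the gap for general $g$, but the proposed route does not actually prove the theorem. Taking $m = \lfloor g^{1/(k-1)}\rfloor$ gives a $B_k[m^{k-1}]\subseteq B_k[g]$ set of size $(1-o(1))(m^{k-1}n)^{1/k}$, which falls short of $(gn)^{1/k}$ by the constant factor $(m^{k-1}/g)^{1/k}$; for small $g$ (e.g.\ $g=2$, $k=3$, where $m=1$) this factor is bounded away from $1$. Everything after that point in your write-up is a plan rather than an argument: the augmentation step is flagged by you yourself as the principal obstacle, and the concerns you raise about it are real. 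In $A_0 = mB + \{0,\dots,m-1\}$ the sums realizing the maximal multiplicity $m^{k-1}$ are dense (they are essentially all $k$-fold sums whose residue modulo $m$ lies in the ``bulk'' of $[0,k(m-1)]$), so any new element $a$ will, via $k$-fold sums $a + a_2 + \cdots + a_k$ with the $a_i\in A_0$, push on the order of $|A_0|^{k-1}$ sums, almost all already at capacity. A union-bound or naive alteration then deletes far more than it adds, and no second-moment or concentration estimate is supplied to rescue this. The structural alternative (replacing $\{0,\dots,m-1\}$ by a tuned set $C$) is also left unspecified. As written, the proof has a genuine gap at exactly the step that distinguishes general $g$ from $g=m^{k-1}$.

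The paper avoids this issue entirely by a different and much cleaner mechanism: rather than \emph{enlarging} a $B_k[1]$ set by adding elements (which risks overshooting the multiplicity bound), it \emph{shrinks the ambient group}. One takes the Bose--Chowla $B_k[1]$ set $A$ of size $q$ in $\mathbb{Z}_{q^k-1}$ with $q$ a prime satisfying $q\equiv 1\pmod g$, lets $H$ be the unique order-$g$ subgroup of $\mathbb{Z}_{q^k-1}$, and passes to the quotient $\Gamma = \mathbb{Z}_{q^k-1}/H$. A short algebraic lemma ($(A-A)\cap H = \{0\}$, using that $\theta$ has degree $k\geq 2$ over $\mathbb{F}_q$) shows the projection is injective on $A$, and counting preimages shows the image is a $B_k[g]$ set of size $q$ in a group of size $(q^k-1)/g$. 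This yields $(1-o(1))(gn)^{1/k}$ directly for all $g$, with the only analytic input being the prime number theorem in arithmetic progressions to locate a suitable prime $q\equiv 1\pmod g$ in the relevant range. You would do better to abandon the augmentation strategy and adopt a quotient-by-subgroup argument, which sidesteps the multiplicity-control difficulty you correctly identified as the crux.
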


Before continuing the discussion, it is important to note that shortly after a preprint of this article was made available, the authors were notified by Carlos Trujillo that while not stated explicitly, Theorem \ref{thm:lower bound} follows from results of Caicedo, G\'{o}mez, and Trujillo \cite{CGT}.  The proof ideas are similar with the notable difference that \cite{CGT} works first in a general setting, and then specializes to known $B_h$-sets.  In our work we focus only on Bose-Chowla $B_h$ sets and consequently, the proof of the lower bound $F_{k,g}(n) \geq (1-o(1))(gn)^{1/k}$ is shorter.  However, we recommend \cite{CGT} for details on this technique and how it can be applied to $B_h$ sets of Bose and Chowla, Ruzsa, and G\'{o}mez and Trujillo. We leave in the details of the proof of Theorem \ref{thm:lower bound} for completeness and because it includes a density of primes argument that gives an asymptotic lower bound for all $n$; the theorem in \cite{CGT} applies only to an infinite sequence in $n$.

Previous work on bounding $F_{k,g}(n)$ is extensive and we have not included much of it. 
In particular, there are numerous papers that consider the case when $k=2$ and $g>1$ (e.g. \cite{CRV,  CV, K,MOB1, MOB2,  Yu1, Yu2}). 
For more information, see the surveys of Plagne \cite{P} and O'Bryant \cite{OB}. In Section \ref{sec upper bound} we prove Theorem \ref{thm: normal upper bound} and in Section \ref{sec lower bound} we prove Theorem \ref{thm:lower bound}.

\section{Upper bounds}\label{sec upper bound}

For a finite set $S\subset \mathbb{N}$, we define $\mathbb{E}(S) = \frac{1}{|S|}\sum_{s\in S} s$ and $\mathrm{Var}(S) = \frac{1}{|S|}\sum_{s\in S}(s - \mathbb{E}(S))^2$. For any random variable $X$ let $f_X$ be its probability distribution function and let $F_X$ be its cumulative distribution function. 

Assume that $A$ is a $B_k[g]$ set in $[n]$. To give an upper bound for the size of $A$, we will consider the distribution of sums of random elements of $A$. Define random variables $X_i$ that are independent and identically distributed by 
\[\mathbb{P}(X_i = a-\mathbb{E}(A)) = \frac{1}{|A|}\]
for every $a\in A$. Note that $\mathbb{E}(X_i) = 0$ and $\mathrm{Var}(X_i) = \mathrm{Var}(A)$. Define $\delta$ so that $\mathrm{Var}(A) = \delta n^2$. Any set of natural numbers up to $n$ has variance at most $\frac{(n-1)^2}{4}$ and hence $\delta \leq 1/4$. 

The details split into two cases depending on if $g=1$ or if $g>1$. When $g=1$ we take advantage of the fact that if $A$ is a $B_k[1]$ set, then for any $c\in \mathbb{Z}$ there is at most one solution (up to rearranging) to the equation
\begin{equation}\label{eq: unique differences}
a_1 + \cdots + a_{\lceil k/2 \rceil} - a_{\lceil k/2 \rceil + 1} - \cdots - a_k = c,
\end{equation}
where $a_1,\cdots, a_k \in A$.

When $g >1$, there are at most $g$ solutions (up to rearranging) to the equation 
\begin{equation}\label{eq: unique sums}
a_1+\dots +a_k =c.
\end{equation}

Define 
\begin{align*}
    Y_1 & = X_1 + \cdots + X_{\lceil k/2 \rceil}\\
    Y_2 & = X_{{\lceil k/2\rceil}+1} + \cdots + X_k\\
    Y &= Y_1-Y_2 \\
    Z & = X_1 + \cdots + X_k.
\end{align*}

When $g =1$ we will consider the random variable $Y$ and when $g>1$ we will consider the random variable $Z$. 

In \cite{CJ}, Cilleruelo and Jim\'enez-Urroz give an upper bound on the size of a $B_k[g]$ set for $g>1$ that depends on the variance of the set. Their proof is easily modified to include the $g=1$ case, and for our purposes it is more convenient to phrase the result in terms of the variance of the set. We give a short proof for completeness.

\begin{theorem}[Theorem 1.1 in \cite{CJ}]\label{variance upper bound}
Let $k,g\in \mathbb{N}$ be fixed. If $A$ is a $B_k[g]$ set in $[n]$, then for $g=1$ we have 
\[
 \frac{|A|^{2k}}{12\left(\lceil \frac{k}{2}\rceil !\lfloor \frac{k}{2}\rfloor!\right)^2} \leq (k+o(1))\mathrm{Var}(A),
\]
and for $g>1$ we have 
\[
 \frac{|A|^{2k}}{12(gk!)^2} \leq (k+o(1))\mathrm{Var}(A).
\]
\end{theorem}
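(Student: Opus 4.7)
The plan is to analyze the variances of the auxiliary random variables $Y$ (used when $g=1$) and $Z$ (used when $g>1$). Since each is a $\pm 1$-weighted sum of $k$ i.i.d.\ copies of $X_{1}$, a variable with mean $0$ and variance $\mathrm{Var}(A)$, we have immediately $\mathrm{Var}(Y)=\mathrm{Var}(Z)=k\,\mathrm{Var}(A)$. The theorem will then follow from a lower bound on $\mathrm{Var}(Y)$ and $\mathrm{Var}(Z)$ in terms of $|A|$, which I would prove via the following engine lemma: for an integer-valued random variable $W$ with $\mathbb{P}(W=c)\le p$ for every $c$, one has $\mathrm{Var}(W)\ge (1-o(1))\,\frac{1}{12p^{2}}$ as $p\to 0$. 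This is a standard majorization/rearrangement argument: among distributions on $\mathbb{Z}$ obeying the pointwise cap and any fixed mean, the variance is minimized by the (nearly) uniform distribution on a symmetric arithmetic progression of length $\lfloor 1/p\rfloor$, whose variance is $\tfrac{1}{12p^{2}}(1+o(1))$.

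For $g>1$, the $B_{k}[g]$ hypothesis immediately gives at most $g$ multisets $\{a_{1},\dots,a_{k}\}\subset A$ summing to any $c$, hence at most $gk!$ ordered tuples, so $p=\max_{c}\mathbb{P}(Z=c)\le gk!/|A|^{k}$. Plugging into the lemma and combining with $\mathrm{Var}(Z)=k\,\mathrm{Var}(A)$ yields part~(ii). For $g=1$ the analogous pointwise bound on $\mathbb{P}(Y=c)$ is more delicate. If two ordered $k$-tuples $(a_{1},\dots,a_{k})$ and $(b_{1},\dots,b_{k})$ both satisfy equation~\eqref{eq: unique differences} with the same $c$, then moving negative terms across exhibits the two ordered $k$-tuples $(a_{1},\dots,a_{\lceil k/2\rceil},b_{\lceil k/2\rceil+1},\dots,b_{k})$ and $(b_{1},\dots,b_{\lceil k/2\rceil},a_{\lceil k/2\rceil+1},\dots,a_{k})$ as having equal sum, and by $B_{k}[1]$ they must agree as multisets. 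Up to negligible ``diagonal'' contributions (such as $c=0$ with $a_{1}=\cdots=a_{k}$), this forces the multisets of the two halves to be individually determined by $c$, so each $c$ arises from at most $\lceil k/2\rceil!\lfloor k/2\rfloor!$ ordered solutions, with an additive error of lower order in $|A|$. The lemma then delivers part~(i).

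The main obstacle I anticipate is the pointwise bound for $Y$ in the $g=1$ case: the $B_{k}[1]$ relation only yields a multiset identity between ``mixed'' $k$-tuples rather than directly between the two halves, and one must argue that the discrepancy between this identity and an identity of halves accounts for only $o(|A|^{k})$ of the ordered solutions, so that it is absorbed by the $(k+o(1))$ factor in the statement. Once this is handled, everything else is a clean match: the sum-of-i.i.d.\ variance calculation on one side and the pointwise-cap variance inequality on the other reduce the proof to a routine matching of constants.
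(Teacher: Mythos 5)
Your proposal takes essentially the same route as the paper: both compute $\mathrm{Var}(Y)=\mathrm{Var}(Z)=k\,\mathrm{Var}(A)$ and lower-bound this via a pointwise cap on the probability mass function together with a packing argument (the paper compares against the uniform distribution on $\{1,\dots,\ell\}$; your ``engine lemma'' is the same bound stated abstractly). The obstacle you flag in the $g=1$ case is genuine and is in fact glossed over in the paper's write-up: the $B_k[1]$ multiset identity is between the two \emph{mixed} $k$-tuples, and this forces the positive and negative halves to agree only when a representing tuple's two halves are disjoint as multisets; the paper simply asserts the per-value cap of $\lceil k/2\rceil!\lfloor k/2\rfloor!$, which literally fails at degenerate values such as $Y=0$. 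Your instinct for the fix is the right one -- the exceptional tuples are precisely those whose two halves share an element, an $O(1/|A|)$ fraction of $A^k$, which is absorbed by the $(k+o(1))$ factor -- so once you make that explicit (the degeneracy is broader than $a_1=\cdots=a_k$ with $c=0$), your argument and the paper's coincide.
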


\begin{proof}
For convenience we assume that $|A|^k$ is divisible by $\lceil \frac{k}{2}\rceil !\lfloor \frac{k}{2}\rfloor!$ when $g=1$ and by $gk!$ when $g>1$. If this is not the case we may truncate $A$ and let the $o(1)$ terms account for the difference.

The result follows by considering the variance of $Y$ when $g=1$ and the variance of $Z$ when $g>1$. Because the $X_i$ are independent, we have that $\mathrm{Var}(Y) = \mathrm{Var}(Z) = k\mathrm{Var}(A)$. To lower bound this quantity, the variance of $Y$ or $Z$ is as small as possible when the values taken by the random variables are as close together as possible. By \eqref{eq: unique differences} and \eqref{eq: unique sums}, when we look at all outputs of $Y$ or $Z$, each value can occur at most $\lceil \frac{k}{2}\rceil !\lfloor \frac{k}{2}\rfloor!$ times for $g=1$ and at most $gk!$ times for $g>1$. 

 Hence we have that the variance is bounded below by the variance of the multiset of integers from $1$ to $\ell$ where $\ell = \frac{|A|^k}{\lceil \frac{k}{2}\rceil !\lfloor \frac{k}{2}\rfloor!}$ when $g=1$, and where $\ell = \frac{|A|^k}{gk!}$ when $g>1$. Since each integer occurs the same number of times in this multiset, the variance of the multiset is the same as that of the discrete uniform distribution of integers up to $\ell$. This is given by 
 \[
 \mathrm{Var}(\{1,\cdots, \ell\}) = \frac{\ell^2-1}{12},
 \]
 and the result follows.
\end{proof}

When $k$ gets large, we can improve Theorem \ref{variance upper bound} by using more precise information about $Y$ and $Z$ than the variance. As $k$ goes to infinity, these distributions will be close to normal distributions, and we use the Berry-Esseen theorem to quantify this.

\begin{theorem} [Berry-Esseen Theorem] \label{berry esseen theorem}
Let $X_1,...,X_n$ be independent random variables with $\mathbb{E}[X_i] = 0$, $\mathbb{E}[X_i^2] = \mathrm{Var}(X_i)$ and 
$\mathbb{E}[|X_i|^3] = \rho_i < \infty$. 
Let $X = X_1 + \cdots + X_n$ and $\sigma^2 = \mathbb{E}[X^2]$. Then 
\[
\sup_{x\in \mathbb{R}} |F_X(x) - \Phi(x)| \leq .56\psi,
\]
where $F_X(x)$ and $\Phi(x)$ are the cumulative distribution functions for $X$ and the normal distribution with mean zero and standard deviation $\sigma$ respectively, and 
$\psi = \sigma^{-3}\cdot \sum_{i=1}^n \rho_i$.
\end{theorem}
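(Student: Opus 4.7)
The plan is to prove this by the classical Fourier-analytic argument, built around Esseen's smoothing inequality and a Taylor expansion of characteristic functions. Write $\phi_j(t) = \mathbb{E}[e^{itX_j}]$, $\phi_X(t) = \prod_{j=1}^n \phi_j(t)$ for the characteristic function of $X$, and $\phi(t) = e^{-\sigma^2 t^2/2}$ for the characteristic function of the $N(0,\sigma^2)$ target. The idea is to control the Kolmogorov distance $\Delta := \sup_x|F_X(x)-\Phi(x)|$ by comparing $\phi_X$ and $\phi$ on a finite interval $[-T,T]$, and then to optimize $T$.

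First I would establish Esseen's smoothing lemma, which says that for any $T > 0$,
\[
\Delta \;\leq\; \frac{1}{\pi}\int_{-T}^{T}\left|\frac{\phi_X(t)-\phi(t)}{t}\right|dt \;+\; \frac{24}{\pi T\,\sigma\sqrt{2\pi}}.
\]
The proof convolves the difference $F_X-\Phi$ with a Fej\'er-type kernel whose Fourier transform is supported in $[-T,T]$, expresses the resulting smoothed object through the inversion formula applied to $\phi_X-\phi$, and uses the uniform bound $\|\Phi'\|_\infty=(\sigma\sqrt{2\pi})^{-1}$ to control the error from replacing $F_X-\Phi$ by its smoothed version. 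This is the standard reduction from a CDF-difference question to a pointwise estimate on characteristic functions.

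Second, I would bound $|\phi_X(t)-\phi(t)|$ pointwise. Writing $\sigma_j^2 = \mathbb{E}[X_j^2]$, Taylor's theorem with integral remainder gives $\phi_j(t) = 1 - \sigma_j^2 t^2/2 + r_j(t)$ with $|r_j(t)|\leq \rho_j|t|^3/6$. For $|t|$ up to order $\sigma^3/\sum_j\rho_j$, one can take logarithms, control each summand $\log\phi_j(t) - (-\sigma_j^2 t^2/2)$ by $C\rho_j|t|^3$, sum, and exponentiate to obtain
\[
|\phi_X(t)-\phi(t)| \;\leq\; C\,e^{-\sigma^2 t^2/4}\,|t|^3\sum_{j=1}^n \rho_j.
\]
Dividing by $|t|$ and integrating yields a contribution of order $\sigma^{-3}\sum_j\rho_j = \psi$ to the first term of Esseen's inequality. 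Choosing $T$ of order $\sigma^3/\sum_j\rho_j$ balances the two terms and gives $\Delta\leq C\psi$ for an absolute constant $C$.

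The hardest part will be producing the specific numerical constant $0.56$: the qualitative bound $\Delta\leq C\psi$ falls out of the outline above with a considerably larger $C$, but reducing to $0.56$ requires delicate tracking of constants at every step. In particular one needs a sharper form of the Taylor remainder that keeps the third moment rather than a crude modulus bound, a careful choice of smoothing kernel in Esseen's lemma (the optimal constant $24$ there is itself nontrivial), and a numerically optimized choice of $T$. I would address this last; the conceptual content lives in the first two steps, and pinning down $0.56$ is essentially a careful bookkeeping exercise carried out by Shevtsova and others in the literature.
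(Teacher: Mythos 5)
The paper does not prove this statement at all: it is the classical Berry--Esseen theorem, quoted as a black box, with the constant $0.56$ taken from the literature (it is Shevtsova's bound for the general, non-identically-distributed case). So the relevant comparison is between your sketch and the standard literature proof, and here your outline is the right classical route: Esseen's smoothing inequality reducing the Kolmogorov distance to an integral of $\left|\frac{\phi_X(t)-\phi(t)}{t}\right|$ over $[-T,T]$, a Taylor/characteristic-function comparison giving $|\phi_X(t)-\phi(t)|\lesssim e^{-c\sigma^2t^2}|t|^3\sum_j\rho_j$ in the relevant range of $t$, and a choice of $T\asymp\sigma^3/\sum_j\rho_j$. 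With standard care (e.g.\ justifying the logarithm of $\phi_j$ only where $|\phi_j|$ is bounded away from $0$, or instead using $\bigl|\prod a_j-\prod b_j\bigr|\le\sum_j|a_j-b_j|$ for numbers of modulus at most $1$), this yields $\sup_x|F_X(x)-\Phi(x)|\le C\psi$ for some absolute constant $C$.

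The genuine gap is the constant itself. The statement you are asked to prove asserts the bound with $0.56$, and your proposal explicitly does not deliver that: the naive smoothing-plus-Taylor argument you describe produces a constant well above $1$, and closing the gap to $0.56$ is not ``careful bookkeeping'' but a substantial research-level computation (Shevtsova's proof involves refined estimates of characteristic functions, delicate case analysis, and numerical optimization over several parameters). Deferring that step to the literature means your argument proves a weaker statement with an unspecified $C$ in place of $0.56$. For what it is worth, such a weaker version would still suffice for the paper's main theorems, since the Berry--Esseen error is ultimately absorbed into an $O(k^{-1/3})$ term; but the intermediate numerical constants in Lemma \ref{lemma: berry esseen lemma} and in the proof of Theorem \ref{thm: normal upper bound} would have to change, and the statement as written ($.56\psi$) is not established by your outline. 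The honest options are either to cite Shevtsova for the constant (as the paper implicitly does) or to state and use the qualitative version.
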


By Theorem \ref{berry esseen theorem}, for any $j$ we can approximate $X_1 + \cdots + X_j$ by a normal random variable with mean $0$ and variance $j\delta n^2$ by using
\begin{align}\label{eq: berry esseen}
\begin{split}
\rho_i &= \mathbb{E}[|X_i|^3] \leq n\mathbb{E}[X_i^2] = \delta n^3,
\\
\sigma^2 &=\mathrm{Var}(X) = j\mathrm{Var}(X_i) = j\delta n^2,
\\
\psi &= \frac{1}{\sigma^3} \sum_{i=1}^j \rho_i \leq \frac{j\delta n^3}{j^{3/2}\delta^{3/2} n^3} = \frac{1}{\sqrt{j \delta}}.
\end{split}
\end{align}

We will approximate $Y_1$ by a normal distribution $\mathcal{N}(0, \lceil{\frac{k}{2}\rceil} \delta n^2)$ that has probability distribution $\varphi_1(x)$ and cumulative distribution function $\Phi_1(x)$. Similarly, let $\mathcal{N}(0, \lfloor \frac{k}{2}\rfloor \delta n^2)$ and $\mathcal{N}(0, k\delta n^2)$ have probability distribution functions $\varphi_2(x)$ and $\varphi(x)$ and cumulative distribution functions $\Phi_2(x)$ and $\Phi(x)$, respectively. Since $F_{Y_1}$ and $F_{Y_2}$ are close to $\Phi_1$ and $\Phi_2$ by Theorem \ref{berry esseen theorem}, we have that $F_Y$ is close to $\Phi$, quantified by the following lemma.

\begin{lemma}\label{lemma: berry esseen lemma}
    For $\Phi$ the cumulative distribution function of $\mathcal{N}(0, k \delta n^2)$, we have 
    \[
    \sup_x |F_Z(x) - \Phi(x)| \leq \frac{.56}{\sqrt{k\delta}},
    \]
    and
    \[
    \sup_x \left| F_Y(x) - \Phi(x)\right| \leq 4 \cdot \frac{.56}{\sqrt{\lfloor \frac{k}{2} \rfloor\delta}}.
    \]
\end{lemma}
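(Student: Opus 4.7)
The first inequality is a direct application of the Berry-Esseen theorem to $Z = X_1 + \cdots + X_k$: the $X_i$ are iid with mean zero, variance $\delta n^2$, and third absolute moment at most $\delta n^3$, so taking $j = k$ in the calculation \eqref{eq: berry esseen} yields $\psi \leq 1/\sqrt{k\delta}$, and Theorem~\ref{berry esseen theorem} delivers the claim immediately.

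For the second inequality I would exploit two structural facts: first, $Y_1$ and $Y_2$ are independent because they involve disjoint blocks of the $X_i$; second, the convolution of two centered normals is a centered normal whose variances add, so $\Phi$ is exactly the CDF of $\tilde Y_1 - \tilde Y_2$ for independent $\tilde Y_i$ with the same variances as the $Y_i$. Conditioning on the ``second half'' gives the identities
\[F_Y(x) = \int F_{Y_1}(x+y)\, dF_{Y_2}(y), \qquad \Phi(x) = \int \Phi_1(x+y)\, d\Phi_2(y),\]
and telescoping produces
\[F_Y(x) - \Phi(x) = \int \bigl(F_{Y_1}(x+y) - \Phi_1(x+y)\bigr)\, dF_{Y_2}(y) + \int \Phi_1(x+y)\, d(F_{Y_2} - \Phi_2)(y).\]
The first integral is bounded in absolute value by $\sup_z |F_{Y_1}(z) - \Phi_1(z)| \leq 0.56 / \sqrt{\lceil k/2 \rceil \delta}$ via Berry-Esseen applied to $Y_1$ (with $j = \lceil k/2 \rceil$ in \eqref{eq: berry esseen}).

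For the second integral I would integrate by parts. Since $\Phi_1$ is bounded and $(F_{Y_2} - \Phi_2)(\pm\infty) = 0$, the boundary term vanishes and the expression equals $-\int (F_{Y_2}(y) - \Phi_2(y))\, \varphi_1(x+y)\, dy$, whose absolute value is at most $\sup_y |F_{Y_2}(y) - \Phi_2(y)| \leq 0.56/\sqrt{\lfloor k/2 \rfloor \delta}$ using Berry-Esseen on $Y_2$ and the fact that $\int \varphi_1 = 1$. Summing the two contributions and using $\sqrt{\lceil k/2 \rceil} \geq \sqrt{\lfloor k/2 \rfloor}$ gives at most $2 \cdot 0.56/\sqrt{\lfloor k/2 \rfloor \delta}$, comfortably within the claimed factor of $4$. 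The only subtle step is the integration by parts: a naive pointwise bound on $\Phi_1$ does not suffice to compare integrals against the different measures $F_{Y_2}$ and $\Phi_2$, so we must transfer the derivative onto $\Phi_1$ in order to convert the $L^\infty$ Berry-Esseen bound on $F_{Y_2} - \Phi_2$ into an integral against the probability density $\varphi_1$.
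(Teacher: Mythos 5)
Your proof is correct, and it takes a genuinely different route from the paper's. The paper writes $Y = Y_1 - Y_2$ as a convolution of densities and expands $f_{Y_1}\ast f_{-Y_2} - \varphi_1 \ast \varphi_2$ into three terms, $\varphi_2 \ast (f_{Y_1}-\varphi_1) + \varphi_1 \ast (f_{-Y_2}-\varphi_2) + (f_{Y_1}-\varphi_1)\ast(f_{-Y_2}-\varphi_2)$, bounds the first two by Berry--Esseen applied to $Y_1$ and $Y_2$, and splits the cross term once more to get a total of four error contributions of size $0.56/\sqrt{\lfloor k/2\rfloor \delta}$. You instead condition on $Y_2$ to write $F_Y$ and $\Phi$ as Stieltjes integrals against $dF_{Y_2}$ and $d\Phi_2$, telescope into two terms, and bound the second via a single integration by parts transferring the weight onto $\varphi_1$. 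This buys two things: (a) a sharper constant ($2$ in place of $4$ before the $0.56$, still comfortably within the lemma's claim), because there is no cross term to control; and (b) a cleaner justification, since working throughout with cumulative distribution functions and integration by parts sidesteps the informal manipulation of ``densities'' $f_{Y_1}$, $f_{Y_2}$ of what are in fact discrete random variables. Your parenthetical remark that a naive pointwise bound on $\Phi_1$ would not suffice, and that the derivative must be moved onto $\Phi_1$, is exactly the right observation and is the step that makes the two-term decomposition work.
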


\begin{proof}
The first inequality follows from Theorem \ref{berry esseen theorem} and \eqref{eq: berry esseen}. Now we prove the second. Since $Y = Y_1 - Y_2$ we have that $f_Y(x) = (f_{Y_1} \ast f_{-Y_2})(x)$. We also have $\varphi = \varphi_1 \ast \varphi_2$. Let $x$ be arbitrary. Then
\begin{align*}
    |F_Y(x) - \Phi(x)| = &\left| \int_{-\infty}^x f_{Y_1} \ast f_{-Y_2} - \varphi_1 * \varphi_2 \right| \\
    =& \left| \int_{-\infty}^x \varphi_2 \ast (f_{Y_1} - \varphi_1) + \varphi_1 \ast(f_{-Y_2} - \varphi_2) + (f_{Y_1} - \varphi_1)\ast (f_{-Y_2} - \varphi_2) \right| \\
    \leq & \left| \int_{-\infty}^x \varphi_2 \ast (f_{Y_1} - \varphi_1)  \right| +\left| \int_{-\infty}^x  \varphi_1 \ast(f_{-Y_2} - \varphi_2)\right| + \left| \int_{-\infty}^x (f_{Y_1} - \varphi_1)\ast (f_{-Y_2} - \varphi_2) \right|.
\end{align*}
We use Theorem \ref{berry esseen theorem} to show that each of the terms on the last line is small.

\begin{align*}
     \left| \int_{-\infty}^x \varphi_2 \ast (f_{Y_1} - \varphi_1) \right| 
    &= \left|\int_{-\infty}^x \int_{-\infty}^{\infty}
    \varphi_2(z-y)\Big(f_{Y_1}(y) - \varphi_1(y)\Big) dz dy \right| 
    \\
    &= 
    \left|\int_{-\infty}^x f_{Y_1}(y) - \varphi_1(y) \ dy \int_{-\infty}^{\infty}
    \varphi_2(z-y) dz \right|\\ 
       &= 
    \left|\int_{-\infty}^x f_{Y_1}(y) - \varphi_1(y) \ dy\right| \left|\int_{-\infty}^{\infty}
    \varphi_2(z-y) dz \right|\\ 
    &\leq \sup_{x\in \mathbb{R}}|F_{Y_1}(x) - \Phi_1(x)| 
    \left| \int_{-\infty}^\infty  \phi_2(z-y) dz \right| 
    \\ 
    &\leq \frac{.56}{\sqrt{\delta\lceil\frac{k}{2}\rceil}}
\end{align*}
where the last inequality follows because $\varphi_2$ is a probability distribution function and from \eqref{eq: berry esseen}. Similarly, noting that $\varphi_2$ is symmetric around $0$, we have that 
\[
\left| \int_{-\infty}^x  \varphi_1 \ast(f_{-Y_2} - \varphi_2)\right| \leq \frac{.56}{\sqrt{\delta\lfloor\frac{k}{2}\rfloor}},
\]
and 
\[
\left| \int_{-\infty}^x (f_{Y_1} - \varphi_1)\ast (f_{-Y_2} - \varphi_2) \right| \leq \left| \int_{-\infty}^x f_{Y_1} \ast (f_{-Y_2} - \varphi_2) \right| + \left| \int_{-\infty}^x \varphi_1\ast (f_{-Y_2} - \varphi_2) \right| \leq 2 \frac{.56}{\sqrt{\delta\lfloor\frac{k}{2}\rfloor}}
\]
\end{proof}

We now have everything that we need to prove Theorem \ref{thm: normal upper bound}.

\begin{proof}[Proof of Theorem \ref{thm: normal upper bound}]
Let $k$ and $g$ be fixed and assume that $A$ is a $B_k[g]$ set in $[n]$. As before, let $\mathrm{Var}(A) = \delta n^2$. If $\delta < \frac{\pi}{24}$, then we may apply Theorem \ref{variance upper bound} to obtain the claimed upper bound. Hence for the remainder of the proof we may assume that $\frac{\pi}{24}\leq \delta \leq \frac{1}{4}$. Since the $X_i$ are independent we have that the standard deviations of the random variables $Y$ and $Z$ are the same and we will denote this quantity by $\sigma$. We will consider the probability of the events that $-t < Y \leq t$ and $-t < Z \leq t$ where $t$ is an integer that will be chosen later. For $g=1$, by Lemma \ref{lemma: berry esseen lemma} and the assumption that $\delta \geq \pi/24$, we have that 
\[
|F_Y(x) - \Phi(x)| \leq \frac{4\cdot 0.56}{\sqrt{\lfloor \frac{k}{2} \rfloor \frac{\pi}{24}}},
\]
for all $x$. Hence

\begin{align*}
	\mathbb{P}[ -t < Y\leq t ] &= F_Y(t) - F_Y(-t) \\
	&= \Phi(t) - \Phi(-t) - 
	\Big( (F_Y(-t) - \Phi(-t)) - (F_Y(t) - \Phi(t))\Big) 
	\\
	&\geq 
	\Phi(t) - \Phi(-t) - 
	\Big|F_Y(-t) - \Phi(-t)\Big| - \Big|F_Y(t) - \Phi(t)\Big| \\
	&\geq \Phi(t) - \Phi(-t) - \frac{8\cdot 0.56}{\sqrt{\lfloor \frac{k}{2} \rfloor \frac{\pi}{24}}}
	\\
	&= 	\frac{1}{\sigma\sqrt{2\pi}} \int_{-t}^{t} \text{exp}(-\frac{x^2}{2\sigma^2})dx 
	-  \frac{4.48}{\sqrt{\lfloor \frac{k}{2} \rfloor \frac{\pi}{24}}}  \\
	&\geq \frac{(2t) \cdot \exp{( -\frac{t^2}{2\sigma^2})}}{\sigma\sqrt{2\pi}}
	-  \frac{4.48}{\sqrt{\lfloor \frac{k}{2} \rfloor \frac{\pi}{24}}}.
\end{align*}

On the other hand, by \eqref{eq: unique differences}, we have that for any fixed $x$
\[
	\mathbb{P}[Y = x] \leq \left(\left\lceil \frac{k}{2}\right\rceil!\right)\left(\left\lfloor \frac{k}{2} \right\rfloor!\right) |A|^{-k}.
\]
Combining these two inequalities yields
\[
 \frac{(2t) \cdot \exp{( -\frac{t^2}{2\sigma^2})}}{\sigma\sqrt{2\pi}} 
	- \frac{4.48}{\sqrt{\lfloor \frac{k}{2} \rfloor \frac{\pi}{24}}}
\leq \mathbb{P}[ -t<Y\leq t] \leq (2t)\left(\left\lceil \frac{k}{2}\right\rceil!\right)\left(\left\lfloor \frac{k}{2} \right\rfloor!\right) |A|^{-k}.
\]
Using the inequality $1-x\leq e^{-x}$ for all $x$ and $\sigma^2 \geq k\pi n^2/24$, we have
\[
e^{\frac{-t^2}{2\sigma^2}} \geq 1 - \frac{t^2}{2\sigma^2} \geq 1 - \frac{12t^2}{\pi k n^2}.
\]
Applying this inequality, dividing both sides by $2t$ and using $\sigma \leq \frac{\sqrt{k}n}{2}$ leads to
\[
\frac{1-\frac{12t^2}{\pi k n^2}}{n\sqrt{\pi k /2}} - \frac{4.48}{2t\sqrt{\lfloor \frac{k}{2} \rfloor \frac{\pi}{24}}} \leq \left(\left\lceil \frac{k}{2}\right\rceil!\right)\left(\left\lfloor \frac{k}{2} \right\rfloor!\right) |A|^{-k}.
\]

Setting $t = k^{1/3}n$, we find that s
\[
\frac{1 - \frac{12}{\pi k^{1/3}}}{n\sqrt{\pi k/2}} -   \frac{\frac{4.48}{2k^{1/3}n \sqrt{\lfloor \frac{k}{2} \rfloor \frac{\pi}{24}}} \cdot n\sqrt{\pi k/2}}{n\sqrt{\pi k/2}}\leq \left(\left\lceil \frac{k}{2}\right\rceil!\right)\left(\left\lfloor \frac{k}{2} \right\rfloor!\right) |A|^{-k}.
\]
Rearranging gives the result for $g=1$. For $g>1$ we use \eqref{eq: unique sums} and have that 
\[
\mathbb{P}[Z = x] \leq g k! |A|^{-k},
\]
for any $x$. Performing a similar calculation on $\mathbb{P}(-k^{1/3}n < Z \leq k^{1/3}n)$ gives the result, and we omit these details.

\end{proof}

\section{Lower Bounds}\label{sec lower bound}

In this section we prove Theorem \ref{thm:lower bound}.  The idea is to begin 
with a known construction of a $B_k[1]$ set and then consider the image of that set in a 
quotient group.  This idea has been used in other extremal graph theory and combinatorial number theory problems before \cite{ARS, BFR, F, livinsky, N, threepartite}.
In particular, and as noted in the discussion following Theorem \ref{thm:lower bound}, Caicedo, G\'{o}mez, and Trujillo \cite{CGT} contains a more general approach than what is done here.

Let $g$ and $k$ be fixed positive integers with $k \geq 2$. 
Assume that $q$ is a power of prime such that $g$ divides $q - 1$.  
Let $\mathbb{F}_{q^k}$ be the finite field with $q^k$ elements and 
let $\theta$ be a generator of the multiplicative group $\mathbb{F}_{q^k}^*$ of nonzero
elements of $\mathbb{F}_{q^k}$.  
Bose and Chowla \cite{BC} proved that 
\[
A = \{a \in \mathbb{Z}_{q^k -1} :\theta^a - \theta \in \mathbb{F}_q\}
\]
is a $B_k[1]$ set in $\mathbb{Z}_{q^k-1}$.  Let $\mu = \frac{ q^k-1}{g}$ 
and let $H$ be the subgroup of $\mathbb{Z}_{q^k - 1}$ generated by $\mu$.  Thus,
$H$ is the unique subgroup of $\mathbb{Z}_{q^k-1}$ with $g$ elements.  

Next we prove a lemma that is essential to the construction.  
This lemma is essentially known (Lemma 2.2 of \cite{TT} or Lemma 2.1 of \cite{BDT}).  
A short proof is included for completeness.

\begin{lemma}\label{AcapALemma}
    If $g$, $k$, $q$, $H$ and $A$ are given as above and $A - A : = \{ a - b : a,b \in A \}$, then 
     $(A-A) \cap H = \{0\}$.
\end{lemma}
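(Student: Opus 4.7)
The plan is to translate the additive condition $(A-A)\cap H \subseteq \{0\}$ into a multiplicative statement in $\mathbb{F}_{q^k}^*$ via the isomorphism $a \mapsto \theta^a$, and then exploit the fact that $\theta$ has degree $k$ over $\mathbb{F}_q$.

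First I would fix $a,b\in A$ with $a-b\in H$ and aim to show $a\equiv b\pmod{q^k-1}$. Because $a,b\in A$, by definition there exist $\alpha,\beta\in\mathbb{F}_q$ with $\theta^a=\theta+\alpha$ and $\theta^b=\theta+\beta$. Next I would use the hypothesis $g\mid q-1$ to argue that the image of $H$ under the isomorphism $\mathbb{Z}_{q^k-1}\to\mathbb{F}_{q^k}^*$, $a\mapsto\theta^a$, is exactly the unique subgroup of $\mathbb{F}_{q^k}^*$ of order $g$; since $g$ divides $q-1$, this subgroup is contained in $\mathbb{F}_q^*$. Consequently $\gamma:=\theta^{a-b}$ is an element of $\mathbb{F}_q^*$.

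Now I would write $\theta^a=\gamma\,\theta^b$, which rearranges to
\[
(1-\gamma)\theta \;=\; \gamma\beta-\alpha.
\]
Here the right-hand side lies in $\mathbb{F}_q$, while $\theta$ generates $\mathbb{F}_{q^k}^*$ and therefore does not lie in any proper subfield (since $k\geq 2$, $\theta\notin\mathbb{F}_q$). Hence we must have $1-\gamma=0$ and $\gamma\beta-\alpha=0$, i.e.\ $\gamma=1$. This yields $\theta^{a-b}=1$, which combined with $\theta$ having order $q^k-1$ gives $a-b\equiv 0\pmod{q^k-1}$, as desired.

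The argument is short; the only subtlety, and what I would be most careful about, is the step identifying $\theta^H$ with a subgroup of $\mathbb{F}_q^*$. The point is that $\mu=(q^k-1)/g$, so $\theta^\mu$ has order $g$; since $g\mid q-1$ and $\mathbb{F}_{q^k}^*$ is cyclic, the order-$g$ subgroup is unique and coincides with the order-$g$ subgroup of $\mathbb{F}_q^*\subseteq\mathbb{F}_{q^k}^*$. Once this is in hand, the remainder of the proof is just the linear calculation above using $\theta\notin\mathbb{F}_q$.
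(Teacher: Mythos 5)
Your proposal is correct and follows essentially the same route as the paper: both use the Bose--Chowla defining relations $\theta^a=\theta+\alpha$, $\theta^b=\theta+\beta$, show that $\theta^{a-b}\in\mathbb{F}_q^*$ (you via uniqueness of the order-$g$ subgroup of the cyclic group $\mathbb{F}_{q^k}^*$, the paper by directly computing the $(q-1)$st power), and then use $\theta\notin\mathbb{F}_q$ to force the linear relation to be trivial.
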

\begin{proof}
Suppose $a,b \in A$ and $a - b \in H$.  There is an element 
$s \in \{0,1, \dots , g- 1 \}$ such that $a - b \equiv s \mu ( \textup{mod}~q^k - 1 )$.  
so $\theta^{ a- b} = \theta^{s \mu }$ in $\mathbb{F}_{q^k}$.  Let $\alpha , \beta \in \mathbb{F}_q$ 
satisfy $\theta^a = \theta + \alpha$ and $\theta^b = \theta + \beta$.  Observe that 
$ ( \theta^{ s \mu } )^{q- 1} = ( \theta^{q^k - 1} )^{ \frac{ s ( q - 1) }{g} } = 1$ 
and so $\theta^{ s \mu } \in \mathbb{F}_q$.  From $\theta^{a-b} = \theta^{s \mu}$, 
it follows that $\theta + \alpha = \theta^{s \mu } ( \theta + \beta)$ so 
\[
( \theta^{s \mu } - 1) \theta + \theta^{s \mu } \beta - \alpha = 0.
\]
The 
minimal polynomial of $\theta$ over $\mathbb{F}_q$ has degree $k \geq 2$ and 
so we must have $\theta^{ s \mu  }  - 1 = 0$ and $\theta^{s \mu } \beta - \alpha = 0$.  
In particular, the first equation implies $s = 0$ and so $a \equiv b( \textup{mod}~q^k - 1)$.  
\end{proof}

In the quotient group $\Gamma := \mathbb{Z}_{q^k - 1} / H$, let $A_H = \{ a +H : a \in A \}$.  
If $a  + H = b + H$ for some $a+H,b+H \in A_H$, then $a- b \in H$ which, 
by Lemma \ref{AcapALemma}, implies $a \equiv b ( \textup{mod}~q^k - 1 )$.  
Hence, $q = |A| = |A_H|$.  Next, we prove that $A_H$ is a 
$B_k [g]$ set in $\Gamma$.  Suppose $c + H \in \Gamma$ and 
we have 
\begin{equation}\label{eq:lb 1}
a_1 + H +  \dots + a_k + H = c +H 
\end{equation}
for some $a_i + H \in A_H$.  We will show that there are at most $g$ such 
solutions up to the ordering of the terms on the left hand side of (\ref{eq:lb 1}). 
Indeed, (\ref{eq:lb 1}) implies $a_1 + \dots + a_k \equiv c + h ( \textup{mod}~q^k - 1)$ for some 
$h \in H$.  There is at most one multiset $\{a_1, \dots a_k \}$ from $A$ that 
is a solution to this equation.  As there are $g$ choices for $h$, 
there will be at most $g$ multisets $\{a_1 + H , \dots , a_k + H \}$ from 
$A_H$ that are solutions to (\ref{eq:lb 1}).  Therefore, $A_H$ is a 
$B_k [g]$ set in $\Gamma$.  



We now finish the proof using a density of primes argument.  
For positive integers $x$, $c$, and $m$, let 
$\pi ( x ; c , m )$ be the number of primes $p$ for which $p \leq x$ and 
$p \equiv c (\textup{mod}~m)$. Writing $\phi$ for the Euler phi function, the 
prime number theorem in arithmetic progressions states that if $\textup{gcd}(c,m) =1$, then
\[
\pi ( x ; c , m ) = \frac{x}{ \phi ( m) \ln x} + O \left(  \frac{ x }{ \ln^2 x } \right).
\]
Let $\alpha = \lfloor (1 - \epsilon)^{1/k} (gn)^{1/k} \rfloor$ and
$\beta = \lfloor ( gn )^{1/k} \rfloor$.  We then have 
\begin{equation}\label{primes in ap}
\pi ( \beta ; 1 , g ) - \pi ( \alpha ; 1 , g ) 
\geq 
\frac{1}{ \phi (g) } \left(  \frac{ \beta}{ \ln \beta } - \frac{ \alpha }{ \ln \alpha } 
- O \left( \frac{ \beta }{ \ln^2 \beta } \right) \right).
\end{equation}
For large enough $n$ depending on $\epsilon$, $g$, and $k$, the 
right hand side of (\ref{primes in ap}) is positive since $\frac{ x }{ \log x}$ is 
strictly increasing for $x > e$.  Thus, there is a prime $q$ with 
$q \equiv 1 ( \textup{mod}~g)$ and $\alpha \leq q \leq \beta$.  We can 
then choose a $B_k[g]$-set $A$ in the group $\mathbb{Z}_{q^k-1} / H $ where
$|A| = q$.  This group is isomorphic to the cyclic group $\mathbb{Z}_{(q^k-1)/g }$ 
and we let $A'$ be a $B_k[g]$-set in this cyclic group.  Since $q \leq \beta$, we 
have $\frac{q^k-1}{g} \leq n$.  Therefore, we can view $A' $ as a subset of 
$\{1,2, \dots , n \}$ and under integer addition, $A'$
is still a $B_k[g]$-set.  It remains to show that $q \geq (1 - o(1))(gn)^{1/k}$, but
this follows from the definition of $\alpha$ and the inequality $q \geq \alpha$.  We conclude that 
for all positive integers $g$ and $k$ with $k \geq 2$, 
\[
F_{k,g}(n) \geq ( 1 - o(1)) ( gn)^{1/k}.
\]


\section*{Acknowledgments} 
The authors would like to thank Carlos Trujillo for bringing \cite{CGT} to our attention.
\bibliographystyle{plain}
\bibliography{bib.bib}

\end{document}